\newtheorem{thm}{Theorem}[section]
\newtheorem{lem}[thm]{Lemma}
\newtheorem{false statement}{False statement}
\newtheorem{cor}[thm]{Corollary}
\theoremstyle{definition}
\newtheorem{defn}[thm]{Definition}
\newtheorem{conj}[thm]{Conjecture}
\makeatletter \@addtoreset{equation}{section}
\def\hf{\mathcal{F}}
\def\ha{\mathcal{A}}
\def\hb{\mathcal{B}}
\def\hc{\mathcal{C}}
\def\hp{\mathcal{P}}
\begin{document}
\title{\bf\Large On the sum of sizes of overlapping families}
\date{}
\author{Peter Frankl$^1$, Jian Wang$^2$\\[10pt]
$^{1}$R\'{e}nyi Institute, Budapest, Hungary\\[6pt]
$^{2}$Department of Mathematics\\
Taiyuan University of Technology\\
Taiyuan 030024, P. R. China\\[6pt]
E-mail:  $^1$peter.frankl@gmail.com, $^2$wangjian01@tyut.edu.cn
}

\maketitle

\begin{abstract}
Let $\ha_1,\ldots,\ha_m$ be families of $k$-subsets of an $n$-set. Suppose that one cannot choose pairwise disjoint edges from $s+1$ distinct families. Subject to this condition we investigate the maximum of $|\ha_1|+\ldots+|\ha_m|$. Note that the subcase $m=s+1$, $\ha_1=\ldots=\ha_m$ is the Erd\H{o}s Matching Conjecture, one of the most important open problems in extremal set theory. We provide some upper bounds, a general conjecture and its solution for the range $n\geq 4k^2s$.
\end{abstract}

\section{Introduction}

Let $[n]=\{1,2,\ldots,n\}$ and $\ha_1,\ldots,\ha_m\subset 2^{[n]}$. Define the rainbow matching number $\nu(\ha_1,\ldots,\ha_m)$ as the maximum integer $s$ such that there exist pairwise disjoint sets $A_{i_1}\in \ha_{i_1}, A_{i_2}\in \ha_{i_2},\ldots, A_{i_s}\in \ha_{i_s}$ with $1\leq i_1<i_2<\ldots<i_s\leq m$. Define $f(n,k,m,s)$ as the maximum of $|\ha_1|+\ldots+|\ha_m|$ over all $\ha_1,\ldots,\ha_m\subset \binom{[n]}{k}$ with $\nu(\ha_1,\ldots,\ha_m)\leq s$.

\vspace{6pt}
{\noindent \bf Hilton Theorem.} (\cite{H77})
If $n\geq 2k$, $s=1$, $m\geq 1$, then
\begin{align}\label{hilton}
f(n,k,m,1) =\max\left\{\binom{n}{k},m\binom{n-1}{k-1}\right\}.
\end{align}

\vspace{6pt}
{\noindent \bf Observation 1.} Let $\ha,\hb\subset 2^{[n]}$ and let $\hp(\ha,\hb)$ denote the collection of all unordered pairs $(A,B)$ such that $A\in \ha,B\in \hb$. Then
\[
\hp(\ha,\hb) \supset \hp(\ha \cap \hb,\ha\cup \hb).
\]
In view of this observation the rainbow matching number cannot increase if we replace $\ha_i,\ha_j$ by $\ha_i\cap \ha_j$, $\ha_i\cup \ha_j$. Repeating this operation will eventually produce a nested family, i.e., $\ha_1\subset \ldots \subset \ha_m$. Since $|\ha|+|\hb|=|\ha\cap \hb|+|\ha\cup \hb|$, in proving an upper bound for $f(n,k,m,s)$ we can always assume that $\ha_1,\ldots,\ha_m$ are nested.

\vspace{6pt}
{\noindent \bf Observation 2.} If $\ha_1,\ldots,\ha_m\subset 2^{[n]}$ are nested and  $\nu(\ha_1,\ldots,\ha_m)\leq s$, then $\ha_{m-s},\ha_{m-s},$\\$\ldots,\ha_{m-s},\ha_{m-s+1},\ldots,\ha_m$ have also rainbow matching number at most $s$.

\begin{cor}
Let $n,k,m, s$ be integers with $n\geq(k+1)s$ and $m\geq s+1$. Then
\begin{align*}
f(n,k,m,s) =&\max\Big\{(m-s)|\hb_0|+|\hb_1|+\ldots+|\hb_s|\colon \hb_0\subset\hb_1\subset \ldots\subset \hb_s\subset \binom{[n]}{k},\\
 &\nu(\hb_0,\ldots, \hb_s)\leq s\Big\}.
\end{align*}
\end{cor}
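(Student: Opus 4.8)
The plan is to prove the two inequalities separately; throughout write $R$ for the maximum on the right-hand side.

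For the bound $f(n,k,m,s)\le R$ I would start from an optimal choice $\ha_1,\dots,\ha_m\subset\binom{[n]}{k}$ with $\nu(\ha_1,\dots,\ha_m)\le s$, apply Observation 1 to assume $\ha_1\subset\dots\subset\ha_m$, and then invoke Observation 2: the $m$-tuple in which $\ha_1,\dots,\ha_{m-s}$ are all replaced by $\ha_{m-s}$ still has rainbow matching number $\le s$, and its total size is $(m-s)|\ha_{m-s}|+|\ha_{m-s+1}|+\dots+|\ha_m|\ge|\ha_1|+\dots+|\ha_m|$ because $|\ha_1|+\dots+|\ha_{m-s}|\le (m-s)|\ha_{m-s}|$ by nestedness. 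Setting $\hb_0=\ha_{m-s}$ and $\hb_j=\ha_{m-s+j}$ for $1\le j\le s$ gives $\hb_0\subset\hb_1\subset\dots\subset\hb_s$, and any rainbow matching of $(\hb_0,\dots,\hb_s)$ lifts (using the indices $m-s,m-s+1,\dots,m$) to one of the enlarged $m$-tuple, so $\nu(\hb_0,\dots,\hb_s)\le s$; hence this tuple is admissible for $R$ and $R\ge (m-s)|\hb_0|+|\hb_1|+\dots+|\hb_s|\ge f(n,k,m,s)$.

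For the reverse bound $f(n,k,m,s)\ge R$ I would take nested $\hb_0\subset\hb_1\subset\dots\subset\hb_s\subset\binom{[n]}{k}$ with $\nu(\hb_0,\dots,\hb_s)\le s$ attaining $R$ and form the $m$-tuple $\ha_1=\dots=\ha_{m-s}=\hb_0$, $\ha_{m-s+j}=\hb_j$ for $1\le j\le s$ (here $m\ge s+1$ is used, so that the block of copies of $\hb_0$ is nonempty). Its total size is exactly $R$, so it suffices to show $\nu(\ha_1,\dots,\ha_m)\le s$; then $f(n,k,m,s)\ge R$.

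Verifying this last point is the one nontrivial step, essentially the converse of Observation 2. Suppose $(\ha_1,\dots,\ha_m)$ had a rainbow matching $C_{p_1},\dots,C_{p_{s+1}}$ with $p_1<\dots<p_{s+1}\le m$ and $C_{p_t}\in\ha_{p_t}$. From $p_t\le m-(s+1-t)$ one gets $q_t:=\max\{0,p_t-(m-s)\}\le t-1$, and by nestedness $C_{p_t}\in\ha_{p_t}\subseteq\hb_r$ for every $r\ge q_t$. Since $q_1\le\dots\le q_{s+1}$ and $q_t\le t-1$, Hall's condition holds for the membership relation between the $s+1$ edges and the families $\hb_0,\dots,\hb_s$ (for any $j$ of the edges, the admissible family-indices form an up-set $\{q,\dots,s\}$ with $q\le s+1-j$), so a system of distinct representatives exists; it exhibits $s+1$ pairwise disjoint edges, one in each of $\hb_0,\dots,\hb_s$, contradicting $\nu(\hb_0,\dots,\hb_s)\le s$. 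I expect this reassignment argument — and in particular pinning down why the nestedness forces $q_t\le t-1$ — to be the main thing to get right; the rest is the uncrossing identity $|\ha|+|\hb|=|\ha\cap\hb|+|\ha\cup\hb|$ together with Observations 1 and 2.
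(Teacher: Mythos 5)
Your proposal is correct and follows the route the paper intends: Observation 1 to reduce to nested families and Observation 2 (together with $(m-s)|\ha_{m-s}|\ge|\ha_1|+\dots+|\ha_{m-s}|$) for the inequality $f\le R$, and the converse reassignment argument for $f\ge R$, which the paper leaves implicit; your verification that $q_t\le t-1$ and that Hall's condition holds is exactly the missing detail and is carried out correctly.
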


\begin{defn}
A sequence of families $\hb_0,\hb_1,\ldots,\hb_s\subset 2^{[n]}$ is called overlapping if $\nu(\hb_0,\hb_1,\ldots,\hb_s)\leq s$.
\end{defn}

Let $p_0\geq p_1\geq \ldots\geq p_s$ be positive constants. Set $\vec{p}=(p_0,\ldots,p_s)$. Fix $n,k$ where $n\geq (s+1)k$. Define
\[
f_{\vec{p}}(n,k,s) =\max\left\{\sum_{i=0}^s p_i |\hb_i|\colon \hb_0\subset \ldots\subset\hb_s \subset \binom{[n]}{k} \mbox{ are overlapping}\right\},
\]
$f(n,k,m,s)$ corresponds to the case $p_0=m-s,p_1=\ldots=p_s=1$.

By applying Katona's cyclic permutation method \cite{K72}, we prove an upper bound on $f_{\vec{p}}(n,k,s)$ for $\vec{p}=(p,1,\ldots,1)$.

\begin{thm}\label{main-1}
Let $p$ be a positive integer and $\vec{p}=(p,1,\ldots,1)$. For $n\geq (s+1)k$,
\[
f_{\vec{p}}(n,k,s) \leq \max\left\{s\binom{n}{k},(p+s)s\binom{n-1}{k-1}\right\}.
\]
\end{thm}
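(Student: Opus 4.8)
I would run Katona's cyclic permutation method, reducing the inequality to a one--dimensional estimate for families of arcs of a cyclic order and proving that estimate by induction on $s$. Let $\hb_0\subset\cdots\subset\hb_s\subset\binom{[n]}{k}$ be nested and overlapping, so that we must bound $p|\hb_0|+|\hb_1|+\cdots+|\hb_s|$. For a cyclic order $\pi$ of $[n]$ and $\hc\subset\binom{[n]}{k}$, let $a_\pi(\hc)$ be the number of members of $\hc$ that are arcs ($k$ consecutive elements) of $\pi$. Since a fixed $k$-set is an arc of exactly $k!\,(n-k)!$ of the $(n-1)!$ cyclic orders,
\[
k!\,(n-k)!\bigl(p|\hb_0|+|\hb_1|+\cdots+|\hb_s|\bigr)=\sum_{\pi}\bigl(p\,a_\pi(\hb_0)+a_\pi(\hb_1)+\cdots+a_\pi(\hb_s)\bigr).
\]
Because $(n-1)!/(k!(n-k)!)$ equals both $\binom{n}{k}/n$ and $\binom{n-1}{k-1}/k$, the theorem follows once one shows, for every $\pi$, the \emph{arc inequality} $p\,a_\pi(\hb_0)+a_\pi(\hb_1)+\cdots+a_\pi(\hb_s)\le\max\{sn,(p+s)sk\}$.

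\noindent\textbf{The arc inequality, by induction on $s$.} Fix $\pi$ and let $\hi_i\subset\hb_i$ be the family of arcs of $\hb_i$; then $\hi_0\subset\cdots\subset\hi_s$, each $|\hi_i|\le n$, $\nu(\hi_0,\ldots,\hi_s)\le s$, and two arcs are disjoint precisely when their starting points have cyclic distance at least $k$. Two elementary facts are used throughout: (i) on a cyclic or linear ground set of size $N\ge(u+1)k$, any family of $k$-arcs with matching number at most $u$ has at most $uk$ members --- the case $u=1$ being the lemma at the core of Katona's proof of the Erd\H{o}s--Ko--Rado theorem; and (ii) at most two pairwise disjoint $k$-arcs can meet one fixed $k$-arc. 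For $s=1$: if $\hi_0=\emptyset$ the left side is $|\hi_1|\le n$, and if $\hi_0\ne\emptyset$ then $\nu(\hi_0,\hi_1)\le 1$ forces $\hi_0$ and $\hi_1$ to be cross--intersecting, so in starting--point coordinates $\hi_0$ sits in an interval of some length $d\le k-1$ and $\hi_1$ in an interval of length $2k-2-d$, giving $p|\hi_0|+|\hi_1|\le p(d+1)+(2k-1-d)\le(p+1)k$. For the inductive step I would split into cases. If $\hi_0=\emptyset$, the left side is $\sum_{i\ge1}|\hi_i|\le sn$. If $\hi_0\ne\emptyset$ and $\nu(\hi_s)\le s$, then by (i) every $|\hi_i|\le sk$, so the left side is at most $(p+s)sk$. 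If $\hi_0\ne\emptyset$ and $\nu(\hi_s)>s$ but $\nu(\hi_1,\ldots,\hi_s)\le s-1$, then $|\hi_0|\le|\hi_1|$ together with the inductive hypothesis applied to $\hi_1\subset\cdots\subset\hi_s$ with weight vector $(p+1,1,\ldots,1)$ bounds the left side by $\max\{(s-1)n,(p+s)(s-1)k\}$. The one remaining case --- $\hi_0\ne\emptyset$, $\nu(\hi_s)>s$, $\nu(\hi_1,\ldots,\hi_s)=s$ --- is the crux: take a rainbow $s$-matching $B_1\in\hi_1,\dots,B_s\in\hi_s$; it cannot be completed by any arc of $\hi_0$, so every arc of $\hi_0$ meets $U:=B_1\cup\cdots\cup B_s$, while fixing $A_0\in\hi_0$ and restricting $\hi_1,\ldots,\hi_s$ to arcs avoiding $A_0$ drops their rainbow matching number to $\le s-1$ and so feeds the inductive hypothesis. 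Balancing the weighted term $p|\hi_0|$ (controlled via (i) applied to $\hi_0$, since $|\hi_0|\le\nu(\hi_0)k$ and $\hi_0\subset\{\text{arcs meeting }U\}$) against the restricted families and their core--hitting parts, and using $|\hi_i|\le|\hi_{\nu(\hi_0)}|$ for small $i$, one checks --- and this is where the hypothesis $n\ge(s+1)k$ enters --- that the left side stays $\le\max\{sn,(p+s)sk\}$.

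\noindent\textbf{Main obstacle.} The difficulty is concentrated in that last case, $\hi_0\ne\emptyset$ with $\nu(\hi_s)>s$, where $\hi_s$ (and some of $\hi_1,\ldots,\hi_s$) may be as large as $n$, so that one must play the weight $p$ on the small family $\hi_0$ off against the large ones. Bounding $\hi_i$'s core--hitting arcs by $\nu(\hi_0)(2k-1)$ is too wasteful for large $p$ --- it yields only something like $(p+s)s(2k-1)$ --- and the remedy is the same correlation already visible for $s=1$, where $|\hi_0|\le d+1$ and $|\hi_1|\le 2k-1-d$ must be used together rather than separately: one invokes fact (i) both for $\hi_0$ and for the restricted families, and chooses the matching realizing $\nu(\hi_0)$ \emph{inside} $\hi_0$ so that its core is as small as possible. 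Making the constants land exactly on $\max\{sn,(p+s)sk\}$ --- and checking that $n\ge(s+1)k$ is precisely what makes them do so --- is the delicate part of the proof.
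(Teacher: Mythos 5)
Your reduction to a per-cyclic-order ``arc inequality'' $p\,a_\pi(\hb_0)+a_\pi(\hb_1)+\cdots+a_\pi(\hb_s)\le\max\{sn,(p+s)sk\}$ is exactly the reduction the paper makes, and your cases $\hi_0=\emptyset$, $\nu(\hi_s)\le s$, and $\nu(\hi_1,\ldots,\hi_s)\le s-1$ are handled correctly. But the proof is not complete: the entire difficulty sits in your last case ($\hi_0\ne\emptyset$, $\nu(\hi_s)>s$, $\nu(\hi_1,\ldots,\hi_s)=s$), and there you only describe a strategy and assert that ``one checks'' the constants work out. They do not, at least not along the route you sketch. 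Fixing $A_0\in\hi_0$ and restricting $\hi_1,\ldots,\hi_s$ to arcs disjoint from $A_0$ gives, via induction, roughly $(s-1)n$ for the restricted families, plus at most $s(2k-1)$ for the arcs meeting $A_0$, plus $p|\hi_0|\le psk$ from your fact (i). When $n$ is only slightly above $(p+s)k$ (so the target is $sn$), the requirement becomes $s(2k-1)+psk\le n\approx(p+s)k$, i.e.\ $sk-s\le pk(1-s)$, which fails for every $s\ge 2$. You correctly diagnose that these bounds are too wasteful and that some correlation between $|\hi_0|$ and the sizes of the large families must be exploited, but you never exhibit that correlation; for $s\ge 2$ the $d$-versus-$2k-1-d$ interval trade-off from your base case has no evident analogue, and this is precisely the content of the theorem rather than a routine verification.

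The paper avoids this case analysis entirely. It forms a bipartite graph between the $n$ arcs of $\sigma$ and $p+s$ vertices ($p$ identical copies of $\hb_0$ together with $\hb_1,\ldots,\hb_s$), takes a partition of (most of) the cycle into $t=\lfloor n/k\rfloor$ pairwise disjoint arcs $M$, observes that $\nu(G[M,Y])\le s$, and applies the K\"onig--Hall theorem to get a vertex cover of size $s$ consisting of $x$ family-vertices and $s-x$ arc-vertices. This yields the single interpolating estimate $e(M,Y)\le(s-x)(p+s)+xt-x(s-x)\le\max\{st,s(p+s)\}$, which is exactly the correlation your sketch is missing: the cover automatically charges large families against few arcs and vice versa. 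Averaging over rotations of $M$ (when $p+s\le t$), or summing over $k$ matchings built from the $tk$ highest-degree arcs (when $p+s>t$), then gives the two branches of the maximum. If you want to salvage your induction, you would need a quantitative version of this cover trade-off inside your final case; as written, the argument has a genuine gap there.
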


Note that the construction $\hb_0=\emptyset, \hb_1=\ldots =\hb_s=\binom{[n]}{k}$ shows that $f_{\vec{p}}(n,k,s)=s\binom{n}{k}$ for $n\geq (p+s)k$. It should be mentioned that the case $n\geq (p+s)k$ has already been proved in \cite{F13} and was used in the proof of the Erd\H{o}s matching conjecture in the current best range by the first author.

Using the shifting technique (cf. \cite{F87}), we determine  $f_{\vec{p}}(n,k,s)$ for $n\geq 4k^2s$ and $\vec{p}=(p,1,\ldots,1)$.
\begin{thm}\label{main-2}
Let $p$ be a positive integer and $\vec{p}=(p,1,\ldots,1)$. For $n\geq 4k^2s$,
\[
f_{\vec{p}}(n,k,s) = \max\left\{s\binom{n}{k},(p+s)\left(\binom{n}{k}-\binom{n-s}{k}\right) \right\}.
\]
\end{thm}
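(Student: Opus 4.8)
The plan is to exhibit the two overlapping families that give the lower bound, and then to prove the matching upper bound by the shifting method and induction on $s$. For the lower bound: $\hb_0=\emptyset$, $\hb_1=\dots=\hb_s=\binom{[n]}{k}$ is overlapping (only $s$ of the families are non-empty, so $\nu\le s$) and gives $\sum_{i=0}^s p_i|\hb_i|=s\binom{n}{k}$; and $\hb_0=\dots=\hb_s=\{A\in\binom{[n]}{k}:A\cap[s]\neq\emptyset\}$ is overlapping (any $s+1$ pairwise disjoint members would meet $[s]$ in $s+1$ distinct points) and gives $\sum_{i=0}^s p_i|\hb_i|=(p+s)\big(\binom{n}{k}-\binom{n-s}{k}\big)$. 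For the upper bound, by Observation 1 we may assume $\hb_0\subseteq\dots\subseteq\hb_s$; then performing the shifts $S_{ij}$ simultaneously on all the families keeps every $|\hb_i|$ (hence the weighted sum) and the nestedness, and does not increase the rainbow matching number --- a rainbow $(s+1)$-matching surviving $S_{ij}$ can be pulled back, since among pairwise disjoint sets at most one contains $i$ and at most one contains $j$, so swapping $i$ and $j$ in the (at most two) offending members lands back in the original families. Thus we may assume all $\hb_i$ are shifted, nested and overlapping, and we induct on $s$, the base case $s=1$ being the Hilton Theorem applied with $m=p+1$.

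Split on $t:=\nu(\hb_s)$. If $t\le s$ then $\nu(\hb_i)\le s$ for all $i$ by nestedness, and since $n\ge 4k^2s$ lies comfortably inside the range where the Erd\H{o}s Matching Conjecture is known, $|\hb_i|\le\binom{n}{k}-\binom{n-\nu(\hb_i)}{k}\le\binom{n}{k}-\binom{n-s}{k}$ for each $i$; summing with the weights $p_i$ gives $\sum_i p_i|\hb_i|\le(p+s)\big(\binom{n}{k}-\binom{n-s}{k}\big)$ (here the overlapping hypothesis is automatic and unused). If instead $\hb_0=\emptyset$, then $\sum_i p_i|\hb_i|=\sum_{i=1}^s|\hb_i|\le s\binom{n}{k}$. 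So the only remaining case is $t=\nu(\hb_s)\ge s+1$ together with $\hb_0\neq\emptyset$; being shifted and non-empty, $\hb_0$ contains $[k]=\{1,\dots,k\}$, hence $[k]\in\hb_i$ for all $i$.

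In this last case I would peel off the block $[k]$. The families $\hb_i':=\{A\in\hb_i:A\cap[k]=\emptyset\}$ for $i=1,\dots,s$ are nested, shifted, and overlapping with rainbow matching number at most $s-1$ on the ground set $[k+1,n]$ of size $n-k$: otherwise $[k]$ together with a rainbow $s$-matching among the $\hb_i'$ would be a rainbow $(s+1)$-matching of $\hb_0,\dots,\hb_s$. The inductive hypothesis --- applied with parameter $s-1$, ground set size $n-k\ge 4k^2(s-1)$, and weight vector $(p+1,1,\dots,1)$, since $p|\hb_0'|+|\hb_1'|\le(p+1)|\hb_1'|$ --- then bounds $p|\hb_0'|+\sum_{i=1}^s|\hb_i'|$. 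Writing $\hb_i=\hb_i'\sqcup\hb_i^\ast$ with $\hb_i^\ast=\{A\in\hb_i:A\cap[k]\neq\emptyset\}$, it remains to add back $p|\hb_0^\ast|+\sum_{i=1}^s|\hb_i^\ast|$ and to check that the total stays below $\max\{s\binom{n}{k},(p+s)(\binom{n}{k}-\binom{n-s}{k})\}$.

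This last estimate is where I expect the main obstacle, and where the hypothesis $n\ge4k^2s$ is actually spent. A crude bound $|\hb_i^\ast|\le\binom{n}{k}-\binom{n-k}{k}$ over-counts, because the two extremal families are extremal for different reasons and such a split double-pays; one instead has to use that if $\hb_s$ contained (almost) all sets meeting $[k]$, then $\nu(\hb_s)\ge s+1$ together with $[k]\in\hb_0$ would --- by shiftedness --- revive a rainbow $(s+1)$-matching. Making this precise is essentially the task of verifying directly that every shifted overlapping chain with $\hb_0\neq\emptyset$ has weighted sum dominated by one of the two explicit constructions --- for instance that $p+s\big(\binom{n}{k}-\binom{n-k-s+1}{k}\big)$, and the analogous expressions coming from other ``intermediate'' shifted configurations, never exceed $\max\{s\binom{n}{k},(p+s)(\binom{n}{k}-\binom{n-s}{k})\}$ --- and these inequalities become valid precisely once $n$ is a fixed quadratic-in-$k$ multiple of $s$. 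The remainder of the argument is the routine shifting/induction skeleton described above; I expect the bookkeeping in this final comparison, and pinning down the exact constant, to be the hard part.
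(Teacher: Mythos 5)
Your lower-bound constructions and the reduction to shifted, nested, overlapping families are fine, and induction on $s$ is indeed the paper's strategy; but the step you yourself flag as ``the hard part'' is the entire content of the theorem, and the decomposition you chose makes it essentially unworkable. Peeling off the block $[k]$ loses control of $\sum_i p_i|\hb_i^\ast|$: the only a priori bound is $|\hb_i^\ast|\le\binom{n}{k}-\binom{n-k}{k}\approx k\binom{n-1}{k-1}$, so the added-back contribution can be as large as $(p+s)\,k\binom{n-1}{k-1}$, which exceeds the target $(p+s)\bigl(\binom{n}{k}-\binom{n-s}{k}\bigr)\approx(p+s)\,s\binom{n-1}{k-1}$ by a factor of roughly $k/s$ whenever $k>s$, and you give no mechanism for recovering this loss. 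The paper avoids it by peeling off a \emph{single vertex}: in the case $(1,ks+2,\ldots,ks+k)\in\ha_0$ one shows, using maximality of the weighted sum together with shiftedness, that \emph{every} $k$-set containing $1$ already lies in every $\ha_i$, so $|\ha_i|=|\ha_i\cap\binom{[2,n]}{k}|+\binom{n-1}{k-1}$ holds \emph{exactly} and the accounting closes with no slack. In the complementary case $(1,ks+2,\ldots,ks+k)\notin\ha_0$ the paper does not peel at all: it proves that $(ks+1,\ldots,ks+k)\notin\ha_s$, so $[ks]$ is a cover of $\ha_s$ and $|\ha_s|\le\binom{n}{k}-\binom{n-ks}{k}$, and that every edge of $\ha_0$ meets $[ks+1]$ in at least two points, whence $|\ha_0|\le(2k-1)s^2\binom{n-2}{k-2}$; it is these two estimates that actually spend the hypothesis $n\ge 4k^2s$. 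Neither ingredient appears in your outline, and the sentence about $\hb_s$ containing ``almost all sets meeting $[k]$'' is a gesture at the second one, not a proof.

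There is a second structural gap: you run the induction directly on the two-term maximum, but the inductive step does not return a two-term maximum. After any peeling, the bound from parameter $s-1$ plus the added-back contribution naturally produces the intermediate quantities $(p+s)\binom{n}{k}-(p+i)\binom{n-i}{k}$ for $1\le i\le s$, not $\max\bigl\{s\binom{n}{k},(p+s)\bigl(\binom{n}{k}-\binom{n-s}{k}\bigr)\bigr\}$. The paper therefore proves the stronger Lemma 3.1, whose right-hand side is $\max_{0\le i\le s}g_k(n,p,s,i)$, and only afterwards reduces this maximum to the endpoints $i\in\{0,s\}$ by a separate convexity computation with the auxiliary function $u(x)$. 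You acknowledge the existence of these intermediate configurations but have not formulated an inductive statement that accommodates them, so even the skeleton of your induction does not close as stated.
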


Let
\[
\mathcal{E}(n,k,s)=\left\{E\in{[n]\choose k}\colon E\cap [s]\neq \emptyset\right\}.
\]
The constructions $\hb_0=\emptyset,\  \hb_{1}=\ldots =\hb_s=\binom{[n]}{k}$ and $\hb_0=\hb_1=\ldots =\hb_s=\mathcal{E}(n,k,s)$ provide the lower bound in Theorem \ref{main-2}.

For general vectors $\vec{p}$, the following two results are determined.

\begin{thm}\label{main-3}
For arbitrary $p_0\geq p_1\geq \ldots\geq p_s>0$,
\[
f_{\vec{p}}((s+1)k,k,s) = (p_0+p_1+\ldots+p_s) \binom{(s+1)k-1}{k}.
\]
\end{thm}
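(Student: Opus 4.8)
The plan is to pair a one-line construction for the lower bound with a double-counting argument over all equipartitions of $[(s+1)k]$ for the upper bound; no symmetrization is needed since $f_{\vec p}$ is already defined over nested sequences.

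For the lower bound, take $\hb_0=\hb_1=\cdots=\hb_s=\binom{[(s+1)k-1]}{k}$, the $k$-sets avoiding the last point. Any $s+1$ pairwise disjoint $k$-sets would use $(s+1)k$ points, impossible inside $[(s+1)k-1]$, so this nested sequence is overlapping and contributes exactly $(p_0+\cdots+p_s)\binom{(s+1)k-1}{k}$.

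For the upper bound, fix an overlapping nested sequence $\hb_0\subseteq\cdots\subseteq\hb_s\subseteq\binom{[(s+1)k]}{k}$. Let $\mathcal P$ be the set of partitions of $[(s+1)k]$ into $s+1$ blocks of size $k$, and for $\pi\in\mathcal P$, $0\le i\le s$, let $c_i(\pi)$ be the number of blocks of $\pi$ lying in $\hb_i$; by nestedness $c_0(\pi)\le\cdots\le c_s(\pi)\le s+1$. Since any $s+1$ pairwise disjoint $k$-sets in $[(s+1)k]$ constitute a member of $\mathcal P$, a rainbow matching of size $s+1$ exists iff some $\pi\in\mathcal P$ admits a perfect matching in the bipartite graph between $\{0,\dots,s\}$ and the blocks of $\pi$ with $i$ joined to a block iff that block lies in $\hb_i$. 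By Hall's theorem, and because nestedness makes $N(I)$ depend only on $\max I$, such a perfect matching for a given $\pi$ exists iff $c_t(\pi)\ge t+1$ for all $t$. Hence the overlapping hypothesis is equivalent to: for every $\pi\in\mathcal P$ there is an index $t=t(\pi)$ with $c_t(\pi)\le t$.

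Next I would prove the elementary inequality that any nondecreasing integer vector $(c_0,\dots,c_s)$ with $0\le c_i\le s+1$ and $c_t\le t$ for some $t$ satisfies $\sum_{i=0}^s p_ic_i\le s\sum_{i=0}^s p_i$: such a vector is dominated coordinatewise by $c_0=\cdots=c_t=t$, $c_{t+1}=\cdots=c_s=s+1$, for which the bound reduces to $\sum_{i>t}p_i\le(s-t)\sum_{i\le t}p_i$, immediate from $p_{t+1}\le p_t\le\sum_{i\le t}p_i$ together with $p_{t+1}\ge\cdots\ge p_s$. Applying this to each $\pi$, summing over $\mathcal P$, and using that each $k$-set is a block in the same number $D$ of partitions (so $\sum_{\pi}c_i(\pi)=D|\hb_i|$) and the double count $(s+1)|\mathcal P|=D\binom{(s+1)k}{k}$, hence $s|\mathcal P|/D=\frac{s}{s+1}\binom{(s+1)k}{k}=\binom{(s+1)k-1}{k}$, we get
\[
\sum_{i=0}^s p_i|\hb_i|=\frac1D\sum_{\pi\in\mathcal P}\sum_{i=0}^s p_ic_i(\pi)\le\frac{s|\mathcal P|}{D}\sum_{i=0}^s p_i=\binom{(s+1)k-1}{k}\sum_{i=0}^s p_i .
\]
The only delicate point is the Hall/partition characterization, specifically the observation that at the boundary $n=(s+1)k$ every maximal set of pairwise disjoint edges is an equipartition; this is exactly what makes the averaging over $\mathcal P$ exact, and it is what fails for larger $n$.
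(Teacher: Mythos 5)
Your proposal is correct and follows essentially the same route as the paper: both average over equipartitions of $[(s+1)k]$ into $k$-blocks and use the K\H{o}nig--Hall theorem together with nestedness and the monotonicity of the $p_i$ to bound the weighted block count of each partition by $s(p_0+\cdots+p_s)$, then convert $\frac{s}{s+1}\binom{(s+1)k}{k}$ into $\binom{(s+1)k-1}{k}$. Your phrasing via Hall's condition $c_t(\pi)\ge t+1$ and coordinatewise domination is just the dual of the paper's vertex-cover count, and your explicit double count over all partitions replaces the paper's random (or Baranyai) choice of a single partition; the argument is sound.
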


\begin{thm}\label{main-4}
For arbitrary $p_0\geq p_1\geq \ldots \geq p_s>0$. Set
\[
 d_{\vec{p}} =\max_{i\geq 1} \frac{i(p_0+\ldots +p_s)}{p_1+\ldots+p_i}.
\]
If $n\geq \max\{(s+1)k, \lceil d_{\vec{p}}\rceil k\}$, then
\[
f_{\vec{p}}(n,k,s) =(p_1+\ldots+p_s) \binom{n}{k}.
\]
\end{thm}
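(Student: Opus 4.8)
The plan is to dispose of the lower bound by an explicit construction and, for the upper bound, to combine Observation~1 with Katona's cyclic permutation method so as to reduce the problem to a one–dimensional ``circular'' packing question. For the lower bound, take $\hb_0=\emptyset$ and $\hb_1=\dots=\hb_s=\binom{[n]}{k}$: no choice of $s+1$ pairwise disjoint sets can involve the empty family $\hb_0$, so these families are overlapping and $\sum_{i=0}^s p_i|\hb_i|=(p_1+\dots+p_s)\binom{n}{k}$, which matches the claimed value.

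For the upper bound, by Observation~1 we may assume $\hb_0\subset\dots\subset\hb_s$ are nested. For a cyclic order $\pi$ of $[n]$ let $\hg_i^{\pi}$ be the set of length-$k$ arcs of $\pi$ lying in $\hb_i$. These families are again nested, and since arcs are genuine $k$-sets the inequality $\nu(\hb_0,\dots,\hb_s)\le s$ forces $\nu(\hg_0^{\pi},\dots,\hg_s^{\pi})\le s$ as well. Averaging over the $(n-1)!$ cyclic orders in the usual way, $\sum_i p_i|\hb_i|=\tfrac{\binom{n}{k}}{n}\,\mathrm{avg}_{\pi}\bigl(\sum_i p_i|\hg_i^{\pi}|\bigr)$, so it is enough to prove the per-cycle bound $\sum_{i=0}^s p_i|\hg_i^{\pi}|\le(p_1+\dots+p_s)\,n$ under the hypothesis $n\ge\max\{(s+1)k,\lceil d_{\vec p}\rceil k\}$; note this bound is tight, realised by $\hg_0^{\pi}=\emptyset$ and every other $\hg_i^{\pi}$ equal to the set of all $n$ arcs.

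Identify an arc with its starting point, so $\hg_i^{\pi}$ becomes $S_i\subset\mathbb{Z}_n$, with $S_0\subset\dots\subset S_s$ and two starting points ``conflicting'' precisely when their cyclic distance is less than $k$. Writing $\tau(x)=\min\{i:x\in S_i\}$, with $\tau(x)=\infty$ if $x\notin S_s$, and $V_t=\{x:\tau(x)=t\}$, a direct rewriting of $\sum_i p_i(n-|S_i|)$ shows the per-cycle bound is equivalent to
\[
p_0\,|V_0|\ \le\ \sum_{t=2}^{s}(p_1+\dots+p_{t-1})\,|V_t|\ +\ (p_1+\dots+p_s)\,|V_\infty| ,
\]
while the overlapping hypothesis says exactly that $\mathbb{Z}_n$ contains no $s+1$ pairwise non-conflicting points whose sorted $\tau$-values are coordinatewise at most $(0,1,\dots,s)$. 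Two ingredients then finish the argument. First, a subset of $\mathbb{Z}_n$ containing no $\ell+1$ pairwise non-conflicting points has at most $\ell k$ elements once $n\ge(\ell+1)k$ (rotate so that it starts at a gap and select points greedily); applied through the nesting this controls $|V_0|$ and, more generally, the truncations $|S_\ell|=|V_0\cup\dots\cup V_\ell|$. Second, the hypothesis delivers, for every $1\le\ell\le s$,
\[
n\ \ge\ d_{\vec p}\,k\ \ge\ \frac{\ell(p_0+\dots+p_s)}{p_1+\dots+p_\ell}\,k\ \ge\ \frac{\ell(p_0+\dots+p_\ell)}{p_1+\dots+p_\ell}\,k ,
\]
which is precisely the inequality making the block configuration — $S_0=\dots=S_\ell$ equal to the family of all $\ell k$ arcs lying inside a fixed window of $(\ell+1)k-1$ consecutive vertices, and $S_{\ell+1}=\dots=S_s=\mathbb{Z}_n$ — no larger than the $\ell=0$ configuration.

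The hard part will be the structural core of the circular problem: showing that an arbitrary overlapping nested arc-family can be transformed, without decreasing $\sum_i p_i|\hg_i^{\pi}|$, into one of the block configurations above — that is, controlling how the nesting levels $V_t$, the rainbow/overlapping constraint, and circular packing interact (ruling out, for instance, configurations made of several windows or ``staircases''). Once the extremal arc-families have been pinned down, the desired inequality is just the convexity-type comparison in the second ingredient; the borderline case $n=(s+1)k$ — where $\lceil d_{\vec p}\rceil\le s+1$, which a short computation shows forces $p_0=\dots=p_s$ — can be read off from Theorem~\ref{main-3}, and the passage from $n\ge d_{\vec p}k$ to the stated $n\ge\lceil d_{\vec p}\rceil k$ merely loses a harmless constant.
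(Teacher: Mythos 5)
Your lower bound and the averaging identity over cyclic orders are fine, but the upper bound has a genuine gap at exactly the point you flag as ``the hard part'': you never prove that an arbitrary nested overlapping arc-family on the circle can be pushed, without decreasing $\sum_i p_i|\hg_i^{\pi}|$, to one of your block configurations. Everything you actually establish --- the packing bound $|S_\ell|\le \ell k$ type estimates and the inequality $n\ge \frac{\ell(p_0+\dots+p_\ell)}{p_1+\dots+p_\ell}k$ --- only compares the block configurations \emph{with each other} and with the $\ell=0$ configuration; it says nothing about configurations that are not blocks (several windows, staircases, or families whose level sets $V_t$ are scattered around the circle). Pinning down the extremal circular configurations is precisely where the difficulty of this route lives: even in the special case $\vec p=(p,1,\dots,1)$ the paper's cyclic argument (Section 2) needs a nontrivial K\"onig--Hall case analysis and still only yields an upper bound of $\max\{ns,(p+s)ks\}$ rather than a structural classification. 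As written, your argument is a reduction to an unproved structural lemma, not a proof.

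For comparison, the paper avoids the circle entirely: it picks a \emph{uniformly random matching} $F_1,\dots,F_t$ of $t=\lfloor n/k\rfloor$ pairwise disjoint $k$-sets, forms the weighted bipartite graph between $\{F_i\}$ and $\{\hb_j\}$, and uses K\"onig--Hall to get a vertex cover of size $s$, say $l$ sets $F_i$ and the $s-l$ lightest families $\hb_{l+1},\dots,\hb_s$ (nestedness lets one choose the cover this way). The covered weight is at most $l(p_0+\dots+p_s)+t(p_{l+1}+\dots+p_s)\le t(p_1+\dots+p_s)$, where the last step is exactly where $t\ge d_{\vec p}$ enters; taking expectations gives the theorem in a few lines. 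If you want to rescue your cyclic approach, the quickest fix is to run this same K\"onig--Hall weight count on the $t$ disjoint arcs $A_k(i),A_k(i+k),\dots$ inside each cycle instead of attempting to classify extremal arc-families.
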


\section{The proof of Theorem \ref{main-1}}

In this section, we prove an upper bound for $f_{\vec{p}}(n,k,s)$ in the case $\vec{p}=(p,1,\ldots,1)$. First we recall some definitions. Let $\sigma=(x_0,\ldots,x_{n-1})$ be a cyclic permutation on $[n]$, $1\leq k<n$ an integer. Define the {\it arc} $A_k(x_i)$ on $\sigma$ as the $k$-set $(x_i,x_{i+1},\ldots,x_{i+k-1})$ where the computation is modulo $n$.  Let
 \[
 \hc(\sigma, k) =\left\{A_k(x_i)\colon i=0,1,\ldots,n-1\right\}.
 \]
 The cyclic permutation method was invented by Katona \cite{K72} to give a simple proof of the Erd\H{o}s-Ko-Rado theorem \cite{ekr}. Various applications of the cyclic permutation method in extremal set theory can be found in a recent paper \cite{F20}. By the cyclic permutation method, Theorem \ref{main-1} is implied by the following lemma.

\begin{lem}
 Let $n,k,p,s$ be positive integers. Let $\sigma$ be a cyclic permutation on $[n]$ and $\hb_0\subset \hb_1\subset \ldots\subset\hb_{s}\subset \hc(\sigma, k)$ be nested overlapping families. If  $n\geq (k+1)s$,  then
\[
p|\hb_0|+|\hb_1|+\ldots +|\hb_s|\leq \max \left\{ns,(p+s)ks\right\}.
\]
\end{lem}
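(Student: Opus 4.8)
The plan is to analyze the structure of the arcs belonging to each $\hb_i$ on a fixed cyclic permutation $\sigma$, exploiting the one-dimensional geometry of arcs together with the overlapping condition $\nu(\hb_0,\dots,\hb_s)\le s$. Write $b_i=|\hb_i|$, so $0\le b_0\le b_1\le\cdots\le b_s\le n$, and we want $p b_0 + b_1+\cdots+b_s\le\max\{ns,(p+s)ks\}$. The key observation is that arcs are intervals on the cycle, so two arcs $A_k(x_i),A_k(x_j)$ are disjoint iff the cyclic distance between $x_i$ and $x_j$ is at least $k$ in both directions; in particular, from a set of arcs one can greedily pick many pairwise disjoint ones as long as their starting points are spread out. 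More precisely, a standard fact (the arc analogue of Katona's argument) is that if $\hc(\sigma,k)$ contains a family of $t$ arcs, then among any such family one can find $\lfloor \text{(total spread)}/k\rfloor$-type pairwise disjoint subfamilies; I would make this quantitative in the form: if $b_{i_1},\dots,b_{i_{s+1}}$ arcs are chosen from distinct $\hb$'s, a rainbow matching of size $s+1$ is forced unless the starting points are confined to a short union of intervals.

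The first step is to set up the shifting/interval reformulation: since $\hb_0\subset\cdots\subset\hb_s$ are nested, identifying each arc $A_k(x_i)$ with its index $i\in\mathbb Z_n$, each $\hb_j$ corresponds to a subset $S_j\subset\mathbb Z_n$ with $S_0\subset S_1\subset\cdots\subset S_s$ and $|S_j|=b_j$. The condition $\nu\le s$ says: one cannot pick $i_0\in S_{j_0},\dots,i_s\in S_{j_s}$ for distinct $j_0<\cdots<j_s$ with the corresponding arcs pairwise disjoint. The second step is to bound $b_s$: if $b_s > ks$ is large, I would argue that $S_0$ (hence every $S_j$) must be ``concentrated'': specifically, I'd show that if $b_0\ge 1$ and all of $S_0,\dots,S_s$ are nonempty, the disjointness structure forces $b_1+\cdots+b_s$ and $b_0$ to trade off so that $pb_0+\sum b_i$ is maximized at one of two extreme configurations — either $b_0=0$ and $b_1=\cdots=b_s=n$ (giving $ns$), or all $b_i$ roughly equal to $ks$ and concentrated near a common point (giving $(p+s)ks$). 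The third step is the case analysis around $b_0$: when $b_0=0$ the bound reads $b_1+\cdots+b_s\le ns$, which is immediate from $b_j\le n$; when $b_0\ge 1$, I would extract from each nonempty $S_j$ an arc, and use $n\ge(k+1)s$ to show that failure of the bound produces $s+1$ pairwise disjoint arcs from distinct families. The quantitative heart is a counting lemma: if $|S_0|\ge 1$ and $|S_j|\ge (p+s)k$-ish for enough $j$, then the arcs spread far enough apart on the length-$n$ cycle (using $n\ge(k+1)s$) to admit a rainbow $(s+1)$-matching.

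The main obstacle I anticipate is the combinatorial geometry in the third step: turning ``the $S_j$ are large'' into ``there is a rainbow matching of size $s+1$'' requires a careful greedy/pigeonhole argument that picks arcs one family at a time, each time deleting a length-$(2k-1)$ window around the chosen starting point, and showing that after $s$ deletions the remaining family is still nonempty (this is where $n\ge(k+1)s$ and the size lower bounds on the $b_j$ enter quantitatively). One must be careful that the families are nested, so the rainbow matching should be built greedily from $\hb_s$ downward to $\hb_0$ — pick the arc for the largest-index family first so that each successive (smaller) family still has an available arc. The optimization that the extremal value is exactly $\max\{ns,(p+s)ks\}$ should then follow by a short convexity/exchange argument: writing the objective as a function of $(b_0,\dots,b_s)$ subject to the derived constraint ``$b_0\ge 1\Rightarrow \sum_{j}\mathbf 1[b_j\ge \text{threshold}]$ small'', the maximum is attained at a vertex of the feasible region, and the two vertices listed give precisely $ns$ and $(p+s)ks$.
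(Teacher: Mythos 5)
Your plan diverges from the paper's proof (which averages over random or structured perfect matchings of arcs and applies the K\"onig--Hall theorem to bound the edges met by an $s$-element vertex cover), and as written it has a genuine quantitative gap at exactly the point you flag as ``the quantitative heart.'' Your mechanism for producing a rainbow $(s+1)$-matching is a greedy selection that deletes a window of $2k-1$ starting points around each chosen arc. The contrapositive of that greedy gives only: either $\hb_0=\emptyset$, or some $j\ge 1$ has $|\hb_j|\le j(2k-1)$. Feeding this into the objective yields at best a bound of the form $(p+s)(2k-1)s$ (the case $j=s$), which exceeds the claimed $(p+s)ks$ by essentially a factor of $2$. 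Obtaining the sharp constant $ks$ is precisely what window-deletion greedy cannot do; Katona-type arguments achieve it by pairing the $2k-2$ arcs meeting a fixed arc into $k-1$ disjoint pairs (for $s=1$), or, as in this paper, by covering the cycle with $t=\lfloor n/k\rfloor$ pairwise disjoint arcs, bounding $e(M,Y)$ through a size-$s$ cover of the induced bipartite graph, and averaging over the choice of $M$. Without a replacement for that step your argument proves a strictly weaker lemma.

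Two smaller points. First, your greedy order is backwards: with nested families $\hb_0\subset\cdots\subset\hb_s$ you must select an arc for the smallest family $\hb_0$ first and work upward, since a choice made for $\hb_s$ can wipe out all of $S_0$; choosing ``for the largest-index family first'' does not protect the smaller families. Second, the concluding ``convexity/exchange'' step is not available until you have an explicit feasible region; the constraint you would derive (some $b_j$ is small) does not by itself force the maximum onto the two configurations you name, and the interplay between the two branches of $\max\{ns,(p+s)ks\}$ --- governed by whether $p+s\le \lfloor n/k\rfloor$ --- must be argued separately, which is exactly what the paper's two cases do.
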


\begin{proof}
 Without loss of generality, we assume that $\sigma=(1,2,\ldots,n)$. Let $t=\lfloor n/k\rfloor$  and $r= n-tk$.  Let
 \[
 X=\hc(\sigma, k) \mbox{ and } Y=\{\hb_{-1},\hb_{-2},\ldots,\hb_{-p},\hb_1, \ldots,\hb_{s}\}
 \]
where $\hb_{-1},\hb_{-2},\ldots,\hb_{-p}$ are identical copies of $\hb_0$.
We construct a bipartite graph $G$ with partite sets $X$, $Y$ and put an edge if $A_k(i)\in \hb_j$. Then it suffices to show that  $e(G)\leq \max\{sn,(p+s)ks\}$. We distinguish two cases as follows:

{\noindent\bf Case 1.} $p+s \leq t$. Choose an element $i$ from $[n]$ uniformly at random. Set
\[
M=\left\{A_k(i),A_k(i+k),\ldots,A_k(i+(t-1)k)\right\}
\]
and note that $M$ consists  of $t$ pairwise disjoint $k$-arcs. Clearly, we have $\nu(G[M,Y])\leq s$. By the K\"{o}nig-Hall Theorem there is a vertex cover  $S$ of size $s$ in  $G[M,Y]$. Set $x=|S\cap Y|$. It follows that
\begin{align}\label{eq-1}
e(M,Y) \leq (s-x)(p+s) +xt-x(s-x)\leq \max\{st, s(p+s)\} = st.
\end{align}
Since $i$ is chosen randomly, the expectation of $e(M,Y)$ equals
\begin{align}\label{eq-2}
\sum_{J\in X} \deg(J) Pr(J\in M) &= \sum_{J\in X} \deg(J) \cdot\sum_{j=0}^{t-1} Pr(J=A_k(i+jk))\nonumber \\[5pt]
&= \sum_{J\in X} \deg(J)\cdot \frac{t}{n}\nonumber \\[5pt]
&=\frac{t}{n}\cdot e(X,Y).
\end{align}
Combining \eqref{eq-1} and \eqref{eq-2}, we conclude that $e(G)=e(X,Y)\leq sn$.

{\noindent\bf Case 2.} $p+s \geq t+1$. Rearrange the arcs in $X$ as $J_1,J_2,\ldots,J_n$ so that
\[
\deg_G(J_1)\leq \deg_G(J_2)\leq \ldots \leq \deg_G(J_n).
\]
Let $U=\{J_{r+1},J_{r+2},\ldots,J_n\}$ and $1\leq y_1<y_2<\ldots<y_{kt}\leq n$ in this order be the heads of the arcs in $U$. For $1\leq j \leq k$,  define
\[
M_j=\left\{A_k(y_j),A_k(y_{j+k}),\ldots,A_k(y_{j+(t-1)k})\right\}.
\]
It is easy to check that  $M_j$ consists of $t$ pairwise disjoint $k$-arcs, implying that $\nu(G[M_j,Y])\leq s$. Clearly we have $\deg(J_r)\leq s$. Otherwise, each vertex in $M_j$ has degree at least $s+1$ in $G[M_j,Y]$, contradicting the fact that $\nu(G[M_j,Y])\leq s$.

Since $\nu(G[M_j,Y])\leq s$, by the K\"{o}nig-Hall Theorem there is a vertex cover  $S$ of size $s$ in  $G[M_j,Y]$. Set $x=|S\cap Y|$. It follows that for $1\leq j\leq k$
\begin{align}\label{neq-7}
e(M_j,Y) \leq (s-x)(p+s) +xt-x(s-x)\leq \max\{st, s(p+s)\} = s(p+s).
\end{align}

If $\deg(J_r)=0$, then by  \eqref{neq-7} we have
\[
e(X,Y) =\sum_{j=1}^k e(M_j,Y)\leq ks(p+s).
\]
If $\deg(J_r)\geq 1$, then each arc in $U$ has degree at least one.  It follows that $1\leq x\leq s$. Then we have for $1\leq j\leq k$
\[
e(M_j,Y) \leq (s-x)(p+s) +xt-x(s-x)\leq \max\{st, (s-1)(p+s) + t-s+1\}.
\]
Since $t\leq p+s-1$, it follows that
\[
e(M_j,Y) \leq s(p+s-1).
\]
Note that $\deg(J_r)\leq s$ and $r<k$. Hence
\[
e(X,Y) =\sum_{j=1}^k e(M_j,Y)+ rs\leq k(p+s-1)s+rs <  k(p+s)s.
\]
Thus the lemma holds.
\end{proof}

\section{The exact value of $f_{\vec{p}}(n,k,s)$ for large $n$}
In this section, we determine  $f_{\vec{p}}(n,k,s)$ for $n\geq 4k^2s$ and $\vec{p}=(p,1,\ldots,1)$. We first show that the result is implied by the following lemma.

\begin{lem}\label{main-22}
Let $n,p,s,i$ be positive integers  and $\vec{p}=(p,1,\ldots,1)$. Define
\[
g_k(n,p,s,i) = (p+i)\left(\binom{n}{k}-\binom{n-i}{k}\right)+(s-i)\binom{n}{k}=(p+s)\binom{n}{k}-(p+i)\binom{n-i}{k}.
\]
If $n\geq 4k^2s$, then
\begin{align}\label{eq-3}
f_{\vec{p}}(n,k,s) = \max_{0\leq i\leq s} g_k(n,p,s,i).
\end{align}
\end{lem}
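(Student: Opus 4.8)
The plan is to show that for $n\geq 4k^2s$ the extremal overlapping nested families $\hb_0\subset\dots\subset\hb_s$ can be taken to be ``initial'' in a strong structural sense, and then to read off the maximum of $\sum p_i|\hb_i|$ from a short combinatorial optimization. By Observation~1 we may already assume the families are nested. The first step is to apply the shifting operators $S_{ij}$ simultaneously to all of $\hb_0,\dots,\hb_s$: shifting preserves each cardinality, preserves the nestedness (since $S_{ij}$ is monotone with respect to inclusion of families), and does not increase the rainbow matching number, so it preserves the overlapping property. Hence we may assume every $\hb_\ell$ is shifted (compressed). The point of shifting is the standard fact that for a shifted family, $\nu$ is controlled by a small ``kernel'': if a shifted family $\hb$ has $\nu(\hb)\geq t$ then it already contains $t$ pairwise disjoint sets among the first $(k+1)t$ coordinates. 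More precisely, I would prove the rainbow analogue: if $\hb_0\subset\dots\subset\hb_s$ are shifted and $n\ge (k+1)(s+1)$, then $\nu(\hb_0,\dots,\hb_s)\le s$ forces the existence of an index $i$ with $0\le i\le s$ such that $\hb_i$ avoids every $k$-set disjoint from $[i]$ — equivalently, $\hb_i\subseteq \mathcal E(n,k,i)=\{E:E\cap[i]\ne\emptyset\}$ for some $i$, while $\hb_0,\dots,\hb_{i-1}$ are unrestricted (i.e. may be all of $\binom{[n]}{k}$) and $\hb_{i+1},\dots,\hb_s$ are unrestricted too. This is the heart of the matter and I expect it to be the main obstacle.

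To establish that structural dichotomy I would argue by contradiction using shiftedness: suppose that for every $i$ the family $\hb_i$ contains some $k$-set disjoint from $[i]$. Since $\hb_i$ is shifted, it then contains the ``canonical'' such set, namely $\{i+1,i+2,\dots,i+k\}$ — actually one wants a cleaner target: I would show by downward induction on $i$ that one can select pairwise disjoint sets $A_0\in\hb_0,\dots,A_s\in\hb_s$ living inside $[(s+1)k]\subseteq[n]$, contradicting $\nu\le s$. The selection uses at each stage that a shifted family containing \emph{a} set disjoint from a given $i$-subset $T$ (here $T$ will be the union of previously chosen sets, of size $\le ik\le sk$) actually contains a set disjoint from $T$ provided $n$ is large enough and the family is nonempty on the relevant ``level'' — this is exactly where $\nu(\hb_i)\ge$ something is used together with shiftedness. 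Getting the bookkeeping right so that the forbidden index $i$ emerges (rather than a contradiction) is the delicate part; the constant $(k+1)(s+1)$, and ultimately $4k^2s$, comes out of making this counting robust. Once we know $\hb_i\subseteq\mathcal E(n,k,i)$ for some $i$ (with the other families taken to be full, which only increases the objective), we also have $\hb_0\subseteq\dots\subseteq\hb_{i-1}\subseteq\hb_i\subseteq\mathcal E(n,k,i)$, so in fact all of $\hb_0,\dots,\hb_i$ are contained in $\mathcal E(n,k,i)$.

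With the structure in hand, the optimization is immediate. Given the forbidden index $i$, the best choice is $\hb_0=\dots=\hb_i=\mathcal E(n,k,i)$ and $\hb_{i+1}=\dots=\hb_s=\binom{[n]}{k}$, which is genuinely overlapping (a rainbow matching of size $s+1$ would need $i+1$ pairwise disjoint sets all meeting $[i]$, impossible). For $\vec p=(p,1,\dots,1)$ this yields objective value $(p+i)|\mathcal E(n,k,i)| + (s-i)\binom{n}{k} = (p+i)\bigl(\binom{n}{k}-\binom{n-i}{k}\bigr) + (s-i)\binom{n}{k} = g_k(n,p,s,i)$, using $|\mathcal E(n,k,i)|=\binom{n}{k}-\binom{n-i}{k}$. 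Taking the maximum over $0\le i\le s$ (the case $i=0$ giving $\hb_0=\emptyset$, everything else full, with value $s\binom nk = g_k(n,p,s,0)$) gives $f_{\vec p}(n,k,s)\le \max_{0\le i\le s} g_k(n,p,s,i)$, and each such construction shows the reverse inequality. Finally one checks that $n\ge 4k^2s$ is enough to run the shifting/selection argument above (in particular $4k^2s\ge (k+1)(s+1)$ comfortably for all $k\ge1,s\ge1$), and that Theorem~\ref{main-2} follows by noting $g_k(n,p,s,i)$ as a function of $i$ is convex-like so its max over $\{0,\dots,s\}$ is attained at an endpoint, i.e. $\max\{s\binom nk,\ (p+s)(\binom nk-\binom{n-s}{k})\}$.
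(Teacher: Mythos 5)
The final optimization step and the derivation of Theorem \ref{main-2} from the lemma are fine and match the paper, but the structural dichotomy you place at the heart of the argument --- that for shifted, nested, overlapping $\hb_0\subset\dots\subset\hb_s$ there must exist an index $i$ with $\hb_i\subseteq\mathcal{E}(n,k,i)$ --- is false, and this is precisely the step you flag as the main obstacle. A counterexample with $s=1$ and any $k\ge 2$: take $\hb_0=\{[k]\}$ and $\hb_1=\{E\in\binom{[n]}{k}\colon E\cap[k]\ne\emptyset\}$. Both families are shifted, they are nested, and they are overlapping, since every member of $\hb_1$ meets $[k]$, the unique member of $\hb_0$; yet $\hb_0\ne\emptyset$ (so $\hb_0\not\subseteq\mathcal{E}(n,k,0)$) and $\{2,\dots,k+1\}\in\hb_1$ misses $\{1\}$ (so $\hb_1\not\subseteq\mathcal{E}(n,k,1)$). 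This also shows concretely why your contradiction argument cannot close: knowing that each $\hb_i$ contains the canonical set $\{i+1,\dots,i+k\}$ does not let you select \emph{pairwise disjoint} representatives --- those canonical sets overlap heavily, and shiftedness alone gives no mechanism to push them apart. Such Hilton--Milner-type configurations are exactly what any proof must beat by counting rather than by an exact structure theorem; note also that even the single-family case of your dichotomy (all $\hb_i$ equal) would instantly give the Erd\H{o}s Matching Conjecture bound with no clique term, which cannot follow from a soft shifting argument valid for all $n\ge(k+1)(s+1)$.

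The paper's actual proof avoids any structure theorem. It inducts on $s$: the cases $\ha_s=\binom{[n]}{k}$ and $(1,ks+2,\dots,ks+k)\in\ha_0$ are both dispatched by the induction hypothesis (the latter after restricting to $[2,n]$ and replacing $(p,s)$ by $(p+1,s-1)$), and in the remaining case it proves only \emph{size} bounds: maximality plus shiftedness force $[ks]$ to be a vertex cover of $\ha_s$, so $|\ha_s|\le\binom{n}{k}-\binom{n-ks}{k}$, while $(1,ks+2,\dots,ks+k)\notin\ha_0$ forces every edge of $\ha_0$ to contain at least two elements of $[ks+1]$, whence $|\ha_0|\le(2k-1)s^2\binom{n-2}{k-2}$. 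For $n\ge 4k^2s$ these crude bounds already fall below $\max_{0\le i\le s} g_k(n,p,s,i)$ after a two-case numerical comparison depending on the size of $p$. To rescue your plan you would need the dichotomy only for extremal configurations, but that is a stability statement at the level of the Erd\H{o}s Matching Conjecture, not something the selection argument you sketch can deliver.
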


\begin{proof}[Proof of Theorem \ref{main-2}.]
By Lemma \ref{main-22}, \eqref{eq-3} holds. Set $h(x) = g_k(n,p,s,x)$. Computing its derivative, we have
\[
h'(x) = \left((p+x)\sum_{j=0}^{k-1} \frac{1}{n-j-x} -1 \right)\binom{n-x}{k}.
\]
Set
\[
u(x) = (p+x)\sum_{j=0}^{k-1} \frac{1}{n-j-x} -1.
\]
It is easy to see that $u(x)$ is an increasing function in the range $[-p, n-k]$ with
\[
u(-p)=-1<0 \mbox{ and } u(n-k) =(p+n-k) \left(1+\frac{1}{2}+\ldots+\frac{1}{k}\right)-1>0.
\]
Let $x_0$ be the unique zero of $u(x)$ in $[-p,n-k]$. Then $h(x)$ is decreasing in $[-p,x_0]$ and increasing in $[x_0, n-k]$. Consequently, the maximum of $h(x)$ in the interval $[0,s]$ is attained for $x=0$ or $x=s$. Thus the theorem follows.
\end{proof}

{\noindent \bf Remark.} Recall that $f_{\vec{p}}(n,k,s)=s\binom{n}{k}$ for $p\leq \frac{n}{k}-s$. If $p\geq \frac{n}{k}$, then
\[
u(0) =p\sum_{j=0}^{k-1} \frac{1}{n-j} -1 \geq p \cdot\frac{k}{n}-1\geq 0.
\]
Thus for $n\geq 4k^2s$ and $p\geq \frac{n}{k}$ we have
\[
f_{\vec{p}}(n,k,s) =(p+s)\left(\binom{n}{k}-\binom{n-s}{k}\right).
\]

Now we turn to the proof of Lemma \ref{main-22}. In the proof, we need the following simple inequalities, which were already used  in \cite{BDE}.

\begin{lem}[\cite{BDE}]
\begin{align}
&l\binom{m-1}{s-1}\geq \binom{m}{s}-\binom{m-l}{s}\geq l\binom{m-l}{s-1},\label{neq-used1}\\[5pt]
&\binom{m-l}{s}/\binom{m}{s}\geq \left(1-\frac{l}{m-s}\right)^s\geq 1-\frac{sl}{m-s}.\label{neq-used2}
\end{align}
\end{lem}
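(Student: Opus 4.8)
The plan is to prove the two displayed inequalities separately, since each is elementary and logically independent of the other. Throughout I assume the natural range $0 \le l \le m-s$, so that $\binom{m-l}{s}$ is a genuine positive quantity and the right-hand side of \eqref{neq-used2} is nonnegative; outside this range the statements are either trivial or vacuous. For \eqref{neq-used1} the key device is a telescoping decomposition via Pascal's rule, and for \eqref{neq-used2} it is to write the ratio of binomial coefficients as a product of $s$ simple factors, bound each factor, and then apply Bernoulli's inequality.

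For \eqref{neq-used1}, I would first observe that Pascal's identity gives $\binom{m-j}{s}-\binom{m-j-1}{s}=\binom{m-j-1}{s-1}$. Summing over $j=0,1,\ldots,l-1$ telescopes to
\[
\binom{m}{s}-\binom{m-l}{s}=\sum_{j=0}^{l-1}\binom{m-j-1}{s-1}.
\]
The summands are nonincreasing in $j$, running from $\binom{m-1}{s-1}$ (at $j=0$) down to $\binom{m-l}{s-1}$ (at $j=l-1$). Since there are exactly $l$ terms, bounding every term above by the largest and below by the smallest yields
\[
l\binom{m-l}{s-1}\le \binom{m}{s}-\binom{m-l}{s}\le l\binom{m-1}{s-1},
\]
which is precisely \eqref{neq-used1}.

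For \eqref{neq-used2}, I would expand the ratio of falling factorials as
\[
\frac{\binom{m-l}{s}}{\binom{m}{s}}=\prod_{i=0}^{s-1}\frac{m-l-i}{m-i}=\prod_{i=0}^{s-1}\left(1-\frac{l}{m-i}\right).
\]
For each index $i\in\{0,\ldots,s-1\}$ we have $m-i\ge m-s+1>m-s$, hence $\frac{l}{m-i}\le\frac{l}{m-s}$ and therefore $1-\frac{l}{m-i}\ge 1-\frac{l}{m-s}\ge 0$ in the assumed range. Multiplying these $s$ nonnegative inequalities gives the first bound $\binom{m-l}{s}/\binom{m}{s}\ge\left(1-\frac{l}{m-s}\right)^s$. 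The second bound is Bernoulli's inequality $(1+x)^s\ge 1+sx$ applied with $x=-\frac{l}{m-s}\ge -1$, which yields $\left(1-\frac{l}{m-s}\right)^s\ge 1-\frac{sl}{m-s}$.

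Since both arguments are short and self-contained, I do not expect a serious obstacle; the only point requiring care is keeping track of the validity range $l\le m-s$, which is needed both for the factorwise comparison (to guarantee each factor is nonnegative before multiplying) and for Bernoulli's inequality (to guarantee $x\ge -1$).
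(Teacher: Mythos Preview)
Your proof is correct. The paper itself does not prove this lemma; it merely states it with a citation to Bollob\'as--Daykin--Erd\H{o}s \cite{BDE}, so there is nothing to compare against beyond noting that your telescoping/Pascal argument for \eqref{neq-used1} and the factorwise product bound plus Bernoulli for \eqref{neq-used2} are exactly the standard elementary verifications one would expect.
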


We also need a general upper bound on the maximum number of edges in hypergraphs with given matching number.

\begin{lem}[\cite{F87}]\label{gbEMC}
Suppose that $\hf\subset \binom{[n]}{k}$, $n\geq k(s+1)$ and $\nu(\hf)\leq s$. Then
\[
|\hf| \leq s\binom{n-1}{k-1}.
\]
\end{lem}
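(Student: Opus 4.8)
My plan is to prove the bound by Katona's cyclic permutation method, the same tool used for Theorem \ref{main-1}, since it converts the global condition $\nu(\hf)\le s$ into a purely local statement about arcs. Fix a cyclic permutation $\sigma$ and look at $\hf\cap \hc(\sigma,k)$, the arcs of $\sigma$ that happen to lie in $\hf$. Because each $k$-subset of $[n]$ occurs as an arc in exactly the same number of cyclic orders, averaging over all $\sigma$ gives
\[
\mathbb{E}_\sigma\big|\hf\cap \hc(\sigma,k)\big| = |\hf|\cdot \frac{n}{\binom{n}{k}}.
\]
Hence it suffices to show that for every $\sigma$ the number of arcs of $\sigma$ lying in $\hf$ is at most $sk$; this immediately yields $|\hf|\le \frac{sk}{n}\binom{n}{k}=s\binom{n-1}{k-1}$, which is exactly the claimed bound.

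So the whole problem reduces to a statement about arcs on a single cycle $\mathbb{Z}_n$: if $P\subseteq \mathbb{Z}_n$ is the set of starting points of the arcs of $\sigma$ that belong to $\hf$, then no $s+1$ of these arcs are pairwise disjoint (this is just $\nu(\hf)\le s$ restricted to the arcs), and I must deduce $|P|\le sk$ whenever $n\ge (s+1)k$. Two arcs $A_k(a),A_k(b)$ are disjoint iff their starts are at cyclic distance at least $k$, so I want: a set of $\ge sk+1$ points on $\mathbb{Z}_n$ must contain $s+1$ points pairwise at cyclic distance $\ge k$. The natural attack is a greedy/covering argument: cut the circle at the largest gap between consecutive points of $P$, list the points linearly, and repeatedly take the leftmost remaining point together with the length-$k$ window starting there. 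Each window meets at most $k$ points of $P$, so if $P$ were coverable by $s$ such windows we would get $|P|\le sk$; and if more than $s$ windows are needed, the greedily chosen starts are pairwise $\ge k$ apart and give $s+1$ pairwise disjoint arcs, contradicting $\nu(\hf)\le s$.

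The step I expect to be delicate is exactly this arc lemma, and within it the \emph{circular} accounting: the greedy selection produces points pairwise far apart along the linear cut, but I also need the wrap-around pair (last chosen point and first) to be at distance $\ge k$, for only then are the corresponding arcs genuinely disjoint on the cycle. Cutting at the largest gap makes this work precisely when that gap has length $\ge k$; the residual case, in which all gaps are shorter than $k$ (so $P$ is ``dense''), must be handled separately, and this is where the hypothesis $n\ge(s+1)k$ is essential — it guarantees enough room on the cycle to extract $s+1$ roughly equally spaced disjoint arcs from a dense $P$. An alternative, and probably the route of \cite{F87}, is to avoid the cycle altogether: first reduce to a shifted $\hf$ (shifting preserves $|\hf|$ and cannot increase $\nu$), then run an induction on $n$ that peels off one vertex, using the structural fact that a shifted family with a matching of size $r$ already contains $r$ pairwise disjoint members whose union is the initial segment $[rk]$ (obtained by pushing the matching down to minimize its total of elements). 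The obstacle there is the case in which deleting a vertex fails to lower the matching number, together with the base case $n=(s+1)k$, which amounts to the ``no perfect matching'' bound $|\hf|\le\binom{n-1}{k}=s\binom{n-1}{k-1}$.
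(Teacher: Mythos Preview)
The paper does not prove Lemma~\ref{gbEMC}; it is quoted from \cite{F87} with no argument given, so there is no in-paper proof to compare your proposal against.

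That said, your plan is sound. The reduction via Katona's averaging to the arc statement ``no $s+1$ pairwise disjoint $k$-arcs $\Rightarrow$ at most $sk$ arcs'' is correct, and the averaging identity you wrote indeed yields $|\hf|\le s\binom{n-1}{k-1}$ once the arc bound is in hand. You have also correctly located the only genuine difficulty: proving $|P|\le sk$ on the cycle, where the wrap-around pair in a greedy selection needs separate care and the hypothesis $n\ge (s+1)k$ enters. Your two-case split (largest gap $\ge k$ versus all gaps $<k$) is the standard way to finish this. As for the alternative you sketch, note that \cite{F87} is precisely Frankl's survey on the shifting technique, so the shifting-plus-induction route you describe---reduce to shifted $\hf$, use that a shifted family with $\nu\ge r$ contains a matching on $[rk]$, and induct on $n$ down to the base $n=(s+1)k$ where the bound reads $\binom{n-1}{k}=s\binom{n-1}{k-1}$---is indeed the argument actually found in that reference, whereas the cycle proof is a later alternative. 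Either route is complete once the indicated details are filled in.
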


In the proof of Lemma \ref{main-22} we also need some facts concerning shifting, an important operation invented by Erd\H{o}s, Ko and Rado \cite{ekr}. For proofs and details cf. \cite{F87}.

Shifting is an operation on families that maintains the size of the sets and of the families. It does not increase the size of the (rainbow) matching number. Repeated application of shifting permits to consider shifted families, a notion that we define below.

Let $(x_1,\ldots,x_k)$ denote the set $\{x_1,\ldots,x_k\}$ where we know that $x_1<\ldots<x_k$. The shifting partial order $\prec$ is defined as follows: $(x_1,\ldots,x_k)\prec (y_1,\ldots,y_k)$ iff $x_i\leq y_i$ for all $1\leq i\leq k$.

\begin{defn}
A family $\hf\subset \binom{[n]}{k}$ is called shifted if $G\prec F\in \hf$ always implies $G\in \hf$.
\end{defn}

By the above consideration, in the proof of Lemma \ref{main-22} we can assume that all families are shifted.

\begin{proof}[Proof of Lemma \ref{main-22}.]
We apply induction on $s$. The case $s=1$ was proved in \cite{F20}. Without loss of generality, we may assume that $\ha_0\subset \ha_1\subset \ldots \subset\ha_s\subset\binom{[n]}{k}$ are $s+1$ overlapping families with $p|\ha_0|+\sum_{i=1}^s |\ha_i|$ maximum.
If $\ha_s=\binom{[n]}{k}$, then
$\ha_0, \ha_1, \ldots, \ha_{s-1}$ are overlapping. By the induction hypothesis, we have
\[
p|\ha_0|+\cdots+|\ha_{s}|\leq \max_{0\leq i\leq s-1} g_k(n,p,s-1,i)+|\ha_{s}| \leq \max_{0\leq i\leq s-1} g_k(n,p,s,i).
\]
Thus, we may assume that $\ha_{s}\subsetneq \binom{[n]}{k}$.

First we show that the lemma holds if $(1,ks+2,\ldots,ks+k)\in \ha_0$. Let
\[
\hb_i =\ha_i\cap \binom{[2,n]}{k}
\]
for $i=1,\ldots,s$.
Since $\ha_i$ is shifted, $\hb_i$ is also shifted. If $\hb_1,\ldots,\hb_s$ span a rainbow matching, then by shiftedness they span a rainbow matching on $[2,ks+1]$, which together with $(1,ks+2,\ldots,ks+k)\in \ha_0$ leads to a rainbow matching in $\{\ha_0,\ha_1,\ldots,\ha_s\}$, a contradiction. Thus, $\hb_1,\ldots,\hb_s$ are overlapping. Applying the induction hypothesis to the $s$ families $\hb_1,\ldots,\hb_s$ on $[2,n]$, we have
\[
(p+1)|\hb_1|+\ldots +|\hb_s| \leq \max_{0\leq i\leq s-1} g_k(n-1,p+1,s-1,i)
\]
Assuming that $p|\ha_0|+\sum_{i=1}^s |\ha_i|$ is maximal, we have
\[
|\ha_i|=|\hb_i|+\binom{n-1}{k-1}.
\]
Therefore,
\begin{align*}
p|\ha_0|+\cdots+|\ha_{s}|&= (p+1)|\hb_1|+\ldots +|\hb_s| +(p+s)\binom{n-1}{k-1}\\[5pt]
&\leq \max_{0\leq i\leq s-1} g_k(n-1,p+1,s-1,i)+(p+s)\binom{n-1}{k-1}\\[5pt]
&\leq \max_{1\leq i\leq s} g_k(n,p,s,i)
\end{align*}
and the lemma follows. Thus, we are left with the case $(1,ks+2,\ldots,ks+k)\notin \ha_0$.

{\noindent \bf Claim 1.} $(ks+1,ks+2,\ldots,ks+k)\notin \ha_s$.
\begin{proof}
Since $\ha_{s}\subsetneq \binom{[n]}{k}$ and $p|\ha_0|+\sum_{i=1}^s |\ha_i|$ is maximal, there exists some $e\notin \ha_s$ such that $\ha_0, \ha_1, \ldots, \ha_{s-1},\ha_s+e$ are not overlapping. It follows that $\ha_0, \ha_1, \ldots, \ha_{s-1}$ is not overlapping. Since $\ha_0, \ha_1, \ldots, \ha_{s-1}$ are shifted, there is a rainbow matching in  $\ha_0, \ha_1, \ldots, \ha_{s-1}$ on $[ks]$. Thus, $(ks+1,ks+2,\ldots,ks+k)\notin \ha_s$.
\end{proof}

By Claim 1, $[ks]$ is a vertex cover set of $\ha_s$. It follows that
\begin{align}\label{neq-1}
|\ha_s|\leq \binom{n}{k} -\binom{n-ks}{k}.
\end{align}
For each $i\in [n]$, let
\[
\ha_0(i) =\left\{T\in \binom{[i+1,n]}{k-1}\colon T\cup \{i\}\in \ha_0\right\}.
\]

{\noindent \bf Claim 2.} $|\ha_0(s+1)|\leq s\binom{n-2}{k-2}$.
\begin{proof}
Suppose to the contrary that $|\ha_0(s+1)|> s\binom{n-2}{k-2}$. Then by Lemma \ref{gbEMC} there is a matching $\{T_1,T_2,\ldots,T_{s+1}\}$ in $\ha_0(s+1)$. By shiftedness, $M=\{\{1\}\cup T_1, \{2\}\cup T_2,\ldots, \{s+1\}\cup T_{s+1}\}$ is a matching of size $s+1$ in $\ha_0$. Since $\ha_0\subset \ha_1\subset \ldots \subset\ha_s$, $M$ is also a rainbow matching of size $s+1$. This contradicts the fact $\ha_0, \ha_1, \ldots, \ha_{s-1},\ha_s$ are overlapping.
\end{proof}

Since $(1,ks+2,\ldots,ks+k)\notin \ha_0$, each edge in $\ha_0$ contains at least two vertices in $[ks+1]$. It follows that
\[
|\ha_0(1)|\leq ks\binom{n-2}{k-2}.
\]
By Claim 2, we have
\begin{align}\label{neq-2}
|\ha_0| \leq \sum_{i=1}^{ks} |\ha_0(i)| \leq  s|\ha_0(1)|+(ks-s)|\ha_0(s+1)|\leq (2k-1)s^2\binom{n-2}{k-2}.
\end{align}
Combining \eqref{neq-1} and \eqref{neq-2}, we arrive at
\begin{align}\label{neq-3}
p|\ha_0|+\cdots+|\ha_{s}|\leq p(2k-1)s^2\binom{n-2}{k-2}+s\left(\binom{n}{k} -\binom{n-ks}{k}\right).
\end{align}
Now we distinguish two cases.

{\noindent\bf Case 1.} $p\leq 4(k-1)s$.

By \eqref{neq-used2} and $n\geq 2k^2s+k$, we have
\[
\binom{n-ks}{k}/\binom{n}{k} \geq 1-\frac{k^2s}{n-k} \geq \frac{1}{2}.
\]
Since $n\geq 4k^{2}s$, we have
\begin{align}\label{neq-4}
 p(2k-1)s^2\binom{n-2}{k-2} &\leq  4(k-1)s\cdot(2k-1)s^2\cdot \frac{k(k-1)}{n(n-1)}\binom{n}{k}\nonumber\\[5pt]
  &\leq \frac{8k^4s^2}{n^2} s \cdot 2\binom{n-ks}{k}\leq s\binom{n-ks}{k}.
\end{align}
Substituting \eqref{neq-4} into \eqref{neq-3}, we have
\begin{align*}
p|\ha_0|+\cdots+|\ha_{s}|\leq s \binom{n}{k} = g_k(n,p,s,0).
\end{align*}

{\noindent\bf Case 2.} $p\geq 4(k-1)s$.

By \eqref{neq-used2} and $n\geq 3ks+k$, we have
\[
\binom{n-s}{k-1}/\binom{n-1}{k-1} \geq 1-\frac{(k-1)(s-1)}{n-k} \geq \frac{2}{3}.
\]
Since $n\geq 4k^2s$, we have
\begin{align}\label{neq-5}
p(2k-1)s^2\binom{n-2}{k-2}&\leq p\frac{(2k-1)s^2(k-1)}{n-1} \binom{n-1}{k-1}\nonumber \\[5pt]
&\leq p\frac{(2k-1)(k-1)s}{n-1}s \cdot \frac{3}{2} \binom{n-s}{k-1}\nonumber\\[5pt]
&\leq p\frac{3k(k-1)s}{n-1}\left(\binom{n}{k}-\binom{n-s}{k}\right)\nonumber\\[5pt]
&\leq \frac{3}{4}p\left(\binom{n}{k}-\binom{n-s}{k}\right)
\end{align}
and
\begin{align}\label{neq-6}
s\left(\binom{n-s}{k}-\binom{n-ks}{k}\right) &\leq \frac{p}{4(k-1)} (k-1)s\binom{n-s-1}{k-1}\nonumber\\[5pt]
&\leq \frac{p}{4}\left(\binom{n}{k}-\binom{n-s}{k}\right).
\end{align}
Therefore, by \eqref{neq-5} and \eqref{neq-6} we have
\begin{align*}
& \ p|\ha_0|+\cdots+|\ha_{s}|-(p+s)\left(\binom{n}{k}-\binom{n-s}{k}\right)\\[5pt]
=& \ p(2k-1)s^2\binom{n-2}{k-2}+s\left(\binom{n-s}{k}-\binom{n-ks}{k}\right)-p\left(\binom{n}{k}-\binom{n-s}{k}\right) \\[5pt]
\leq & \ 0.
\end{align*}

Consequently, in both cases we have
\[
p|\ha_0|+\cdots+|\ha_{s}|\leq \max_{0\leq i\leq s} g_k(n,p,s,i),
\]
which completes the proof.
\end{proof}

\section{Proofs of Theorems \ref{main-3} and \ref{main-4}}

\begin{proof}[Proof of Theorem \ref{main-3}.]
Let $\mathcal{B}_0\subset \mathcal{B}_1\subset \cdots \subset \mathcal{B}_s\subset \binom{[n]}{k}$ be  overlapping families. Note that
\[\binom{(s+1)k-1}{k}=\frac{s}{s+1}\binom{(s+1)k}{k}.
\]
 Set $n=(s+1)k$. Let
\[
\hp = \{F_0,\ldots,F_s\}\subset \binom{[n]}{k}
\]
be an arbitrary partition. Consider the weighted bipartite graph $G$ where we have an edge $(F_i,\hb_j)$ iff $F_i\in \hb_j$. This edge gets weight $p_j$.

Applying the K\"{o}nig-Hall Theorem, we can find $s$ vertices covering all edges of the bipartite graph. Let by symmetry $F_0,\ldots,F_{l-1}$ be the vertices of the covering set chosen from $\hf$ and $\hb_{l+1},\ldots,\hb_s$ the remaining $s-l$ chosen from the families. (Here we used $\mathcal{B}_0\subset \mathcal{B}_1\subset \cdots \subset \mathcal{B}_s$. Nestedness guarantees that the neighborhood of $\hb_i$ is contained in the neighborhood of $\hb_j$ for $i<j$.)

Let us estimate the total weight of the edges in $G$. For $F_i$ the total weight is at most $\leq p_0+\ldots+p_s$. For $\mathcal{B}_j$ the total weight is at most $(s+1)p_j$. Note that $p_0\geq \ldots\geq p_s\geq 0$ implies
\[
p_{l+1}+\ldots+p_s \leq \frac{s-l}{s+1}(p_0+\ldots+p_s).
\]
Thus the total weight of the edges in $G$ is at most
\[
l(p_0+\ldots +p_s)+(s-l)(p_0+\ldots +p_s) =s(p_0+\ldots +p_s)
\]
with equality possible only if $l$ or $s-l$ is 0. Moreover, in the case of $l=0$,
\[
(p_1+\ldots +p_s)=\frac{s}{s+1}(p_0+\ldots +p_s)
\]
yields $(p_1+\ldots +p_s)=sp_0$ whence $p_0=p_1=\ldots=p_s$. Using uniform random choice for $\hp$ or letting $\hp$ run over a Baranyai partition system proves the theorem.
\end{proof}

{\noindent \bf Remark.} The above analysis for equality shows that unless $\vec{p} =(p_0,p_0,\ldots,p_0)$, the only way to have equality in Theorem \ref{main-4} is $\hb_0=\hb_1=\ldots=\hb_s$, consequently, $\hb_s =\binom{(s+1)k-1}{k}$ and $\hb_s$ satisfies $\nu(\hb_s)=s$. In view of \cite{F87}, then  $\hb_s=\binom{Y}{k}$ for some $|Y| = (s+1)k-1$.

\begin{proof}[Proof of Theorem \ref{main-4}.]
Let $\mathcal{B}_0\subset \mathcal{B}_1\subset \cdots \subset \mathcal{B}_s\subset \binom{[n]}{k}$ be  overlapping families. Let $ t = \lfloor n/k\rfloor$ and choose a random matching $F_1,F_2,\ldots,F_t$. Consider the weighted bipartite graph $G$ where we have an edge $(F_i,\mathcal{B}_j)$ iff $F_i\in \mathcal{B}_j$. This edge gets weight $p_j$.

Applying the K\"{o}nig-Hall Theorem we can find $s$ vertices covering all edges of the bipartite graph. Let $F_1,\ldots,F_l$ be the vertices of the covering set chosen from $\hf$ and $\mathcal{B}_{l+1}, \ldots, \mathcal{B}_s$ the remaining $s-l$ chosen from the families.

For $F_i$ the total weight is at most $\leq p_0+\ldots+p_s$. For $\mathcal{B}_j$ the total weight is at most $tp_j$. Thus, the total weight of the edges in $G$ is at most
\begin{align*}
l(p_0+\ldots+p_s)+t(p_{l+1}+\ldots +p_s)&= t(p_{1}+\ldots +p_s)- t(p_1+\ldots +p_l)+l(p_0+\ldots+p_s)\\[5pt]
&\leq t(p_{1}+\ldots +p_s)-  d_{\vec{p}}(p_1+\ldots +p_l)+l(p_0+\ldots+p_s)\\[5pt]
&\leq t(p_{1}+\ldots +p_s).
\end{align*}

Since the probability
\[
Pr(F_i\in \hb_j) =\frac{|\hb_j|}{\binom{n}{k}},
\]
the expected value of the  total weight of the edges in $G$ is $\sum_{j=0}^s tp_i \frac{|\hb_j|}{\binom{n}{k}}$. Thus, the theorem follows.
\end{proof}

\section{Concluding remarks}

The following conjecture would imply all our results in this paper.

\begin{conj}\label{conj-1}
Let $p$ be a positive integer and $\vec{p}=(p,1,\ldots,1)$. For $n\geq (s+1)k$,
\[
f_{\vec{p}}(n,k,s) = \max \left\{ s\binom{n}{k}, (p+s) \binom{(s+1)k-1}{k},(p+s)\left(\binom{n}{k}-\binom{n-s}{k}\right)\right\}.
\]
\end{conj}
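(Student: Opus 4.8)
The lower bound is already supplied by the three families in the statement: $\hb_0=\emptyset$ with $\hb_1=\cdots=\hb_s=\binom{[n]}{k}$ (giving $s\binom nk$), the common value $\binom{Y}{k}$ with $|Y|=(s+1)k-1$ (giving $(p+s)\binom{(s+1)k-1}{k}$), and the common value $\mathcal{E}(n,k,s)$ (giving $(p+s)(\binom nk-\binom{n-s}{k})$). So the entire content is the matching upper bound. My plan is to run the shifting-plus-induction scheme of Lemma \ref{main-22}, replacing the crude counting that is valid only for $n\ge 4k^2s$ by the Erd\H{o}s Matching Conjecture (EMC) as an input. First I would reduce, via Observation 1 and the shifting remarks preceding Lemma \ref{main-22}, to the case where $\hb_0\subset\cdots\subset\hb_s$ are shifted, and induct on $s$, peeling off the case $\hb_s=\binom{[n]}{k}$ exactly as there. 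The one structural fact available for free is that $\nu(\hb_0)\le s$: a matching of size $s+1$ inside $\hb_0$ would, since $\hb_0\subset\hb_i$ for every $i$, already be a rainbow matching. Hence EMC bounds the heavily weighted family by $|\hb_0|\le E:=\max\{\binom{(s+1)k-1}{k},\binom nk-\binom{n-s}{k}\}$.

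Second, I would re-examine the dichotomy of Lemma \ref{main-22} driven by whether $(1,ks+2,\ldots,ks+k)\in\hb_0$. In the compression branch one passes to $[2,n]$ and invokes the inductive statement with $(p,s)$ replaced by $(p+1,s-1)$, which goes through verbatim. In the other branch Claim 1 produces the cover $[ks]$ of $\hb_s$, whence $|\hb_s|\le\binom nk-\binom{n-ks}{k}$, and one must balance the weighted sum. The quantitative heart of the matter is exactly this step: the shifted cover one extracts has size $ks$, whereas the target family $\mathcal{E}(n,k,s)$ has cover $[s]$, and in Lemma \ref{main-22} the gap between $\binom{n-s}{k}$ and $\binom{n-ks}{k}$ is what the hypothesis $n\ge 4k^2s$ absorbs in \eqref{neq-4}--\eqref{neq-6}. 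For the full range this slack disappears, so I would feed in the exact two-term EMC bound (rather than Lemma \ref{gbEMC}) together with a stability/cover analysis to pin down $\hb_s$. Crucially, the $\hb_i$ with $i\ge 1$ can be as large as $\binom nk$, so EMC cannot be applied termwise; it is precisely the nesting that must force them small once $s\binom nk$ is dominated. The third term $(p+s)\binom{(s+1)k-1}{k}$ enters exactly when $n$ is close to $(s+1)k$, i.e. in the clique regime where EMC is governed by $\binom{(s+1)k-1}{k}$.

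The hard part---indeed the reason this is stated as a conjecture---is that Conjecture \ref{conj-1} already contains EMC, so no unconditional proof for the whole range $n\ge(s+1)k$ can be expected from the present techniques. Concretely, specialize to $p$ so large that the two $(p+s)$-terms dominate $s\binom nk$; the asserted bound then reads $f_{\vec p}(n,k,s)\le(p+s)E$. Testing it on the overlapping tuple $\hb_0=\cdots=\hb_s=\hf$ for an arbitrary $\hf$ with $\nu(\hf)\le s$ gives $(p+s)|\hf|\le(p+s)E$, i.e. $|\hf|\le E$, which is exactly EMC. The realistic deliverable is therefore the equivalence ``Conjecture \ref{conj-1} $\Longleftrightarrow$ EMC in the same range of $n$'': the forward implication is the one-line argument just given, and the converse is the EMC-driven induction of the two preceding paragraphs. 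This would yield Conjecture \ref{conj-1} unconditionally in every range where EMC is known---e.g. for $n\ge(2s+1)k-s$ by the first author's theorem in \cite{F13}, already improving the bound $n\ge 4k^2s$ of Theorem \ref{main-2}---while the genuinely open window $(s+1)k\le n<(2s+1)k-s$ coincides exactly with the open range of the Erd\H{o}s Matching Conjecture itself.
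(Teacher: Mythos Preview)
This statement is presented in the paper as a \emph{conjecture}; there is no proof to compare against. The paper's only comment on it is the sentence immediately following---that Conjecture~\ref{conj-1} would imply EMC via the subcase $\ha_0=\cdots=\ha_s$---which is precisely your forward implication. You are right that no unconditional argument is to be expected.

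The substantive claim in your proposal is the converse: that EMC in a given range of $n$ already yields Conjecture~\ref{conj-1} in that same range, so that in particular the conjecture would follow for all $n\ge(2s+1)k-s$. This is not established by your sketch. The compression branch of the dichotomy recurses cleanly, but in the ``other branch'' the only structural output is a cover of $\hb_s$ by $[ks]$ together with an upper bound on $|\hb_0|$, and feeding in the sharp EMC bound $|\hb_0|\le\binom{n}{k}-\binom{n-s}{k}$ in place of Lemma~\ref{gbEMC} does not close the estimate once $n$ drops to order $sk$. Concretely, take $p=1$ and $n=(2s+1)k$: Claim~1 gives only $|\hb_s|\le\binom{n}{k}-\binom{(s+1)k}{k}$ with $\binom{(s+1)k}{k}/\binom{n}{k}$ of order $2^{-k}$, so the resulting bound on $|\hb_0|+s|\hb_s|$ overshoots the target $s\binom{n}{k}$ whenever $s<2^{k-1}$. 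The mismatch between the cover $[ks]$ that Claim~1 produces and the cover $[s]$ of the extremal family $\mathcal{E}(n,k,s)$ is exactly what the hypothesis $n\ge4k^2s$ absorbs in \eqref{neq-4}--\eqref{neq-6}, and your proposed ``stability/cover analysis to pin down $\hb_s$'' is a placeholder, not an argument. Until that step is actually supplied, the asserted equivalence ``EMC $\Longleftrightarrow$ Conjecture~\ref{conj-1}'' is itself an open problem rather than a deliverable.
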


Note that Conjecture \ref{conj-1} would imply the Erd\H{o}s matching conjecture (EMC for short) which is the subcase $\ha_0=\ldots=\ha_s$.

 Another generalisation of EMC was proposed by Aharoni and Howard.

\begin{conj}[\cite{AH}]\label{conj-2}
Suppose that $\hb_0,\ldots,\hb_s\subset\binom{[n]}{k}$ are overlapping, $n\geq k(s+1)$. Then
\begin{align}\label{eqn-1}
\min_{0\leq i\leq s} |\hb_i| \leq \max\left\{\binom{(s+1)k-1}{k},\binom{n}{k}-\binom{n-s}{k}\right\}.
\end{align}
\end{conj}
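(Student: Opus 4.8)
The plan is to prove the bound by induction on $s$, using only shifting (which preserves each $|\hb_i|$ individually) rather than the intersection--union operation of Observation~1 (which preserves the sum but destroys the balance among the families that is essential for the minimum). First I would record two harmless reductions. Deleting a set never increases the rainbow matching number and never raises the minimum, so after deleting from every family down to the smallest one I may assume $|\hb_0|=\cdots=|\hb_s|=m:=\min_i|\hb_i|$; the goal becomes $m\le\max\{\binom{(s+1)k-1}{k},\binom{n}{k}-\binom{n-s}{k}\}$. Applying one shift simultaneously to all families keeps them overlapping (shifting does not increase the rainbow matching number) and preserves each size, so I may also assume every $\hb_i$ is shifted. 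The base case $s=1$ is the classical cross-intersecting bound $\min\{|\hb_0|,|\hb_1|\}\le\binom{n-1}{k-1}$ for $n\ge 2k$, which I would take as known and which matches the target since $\binom{n}{k}-\binom{n-1}{k}=\binom{n-1}{k-1}\ge\binom{2k-1}{k}$.

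For the inductive step I would split on the behaviour of the first $s$ families. If $\nu(\hb_0,\dots,\hb_{s-1})\le s-1$, then the induction hypothesis applied to these $s$ families gives $m\le\max\{\binom{sk-1}{k},\binom{n}{k}-\binom{n-(s-1)}{k}\}$, which is dominated term by term by the target bound, and we are done. Otherwise $\hb_0,\dots,\hb_{s-1}$ span a rainbow matching of size $s$, and by shiftedness it may be taken inside $[ks]$. Then no set of $\hb_s$ can avoid $[ks]$, since such a set would extend the matching to a rainbow matching of size $s+1$; hence $[ks]$ is a vertex cover of $\hb_s$ and $m=|\hb_s|\le\binom{n}{k}-\binom{n-ks}{k}$. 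Via \eqref{neq-used1}--\eqref{neq-used2} this already beats the trivial averaging bound $\tfrac{s}{s+1}\binom{n}{k}$ once $n$ is of order $k^2s$, confirming that the true saving $\binom{n}{k}-\binom{n-s}{k}$ is of the correct order of magnitude.

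The hard part will be to sharpen the cover $[ks]$ to size essentially $s$. The estimate $\binom{n}{k}-\binom{n-ks}{k}$ overshoots the target $\binom{n}{k}-\binom{n-s}{k}$ by a factor of order $k$, so it does not by itself close the argument; I would try to produce a rainbow matching of $\hb_0,\dots,\hb_{s-1}$ whose footprint has size $s$ rather than $ks$, controlling the sets of $\hb_s$ that meet the cover in few elements by the link/derivative estimates of Claim~2 of Lemma~\ref{main-22} together with the shifted-family structure theory of \cite{F87}. This sharpening is precisely the rainbow analogue of the passage from the crude Erd\H{o}s matching bound to the exact one, and the diagonal case $\hb_0=\cdots=\hb_s$ of \eqref{eqn-1} is exactly the Erd\H{o}s Matching Conjecture. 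Consequently a proof valid on the whole stated range $n\ge k(s+1)$ -- in particular through the threshold $n\approx(s+1)k$, where the extremal configuration switches to the $\binom{(s+1)k-1}{k}$ clique type resolved at $n=(s+1)k$ by the K\"onig--Hall/Baranyai argument behind Theorem~\ref{main-3} -- would settle EMC, and the intermediate range inherits the full difficulty of that problem. Thus I expect the induction above to dispose cleanly of every case except this cover-sharpening step, which is the genuine obstacle.
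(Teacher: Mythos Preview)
This statement is not proved in the paper; it is recorded there as Conjecture~\ref{conj-2} (the Aharoni--Howard rainbow matching conjecture), with only a survey of partial results for restricted ranges of $n$, $k$, $s$. There is therefore no ``paper's own proof'' to compare against, and no complete argument is expected of you.

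Your write-up is not a proof but a strategy sketch, and you are candid about this: the induction closes the easy branch $\nu(\hb_0,\dots,\hb_{s-1})\le s-1$, and in the remaining branch you correctly obtain a cover of $\hb_s$ of size $ks$ via shiftedness, giving $m\le\binom{n}{k}-\binom{n-ks}{k}$. But, as you yourself note, this is off by a factor of order $k$ from the target $\binom{n}{k}-\binom{n-s}{k}$, and you offer no mechanism for the ``cover-sharpening'' from $ks$ down to $s$. That step is not a technicality: already in the diagonal case $\hb_0=\cdots=\hb_s$ it is exactly the Erd\H{o}s Matching Conjecture, which is open in the full range $n\ge k(s+1)$. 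So the gap you identify is genuine and is the whole problem; the preceding reductions (equalising the sizes, simultaneous shifting, the $s=1$ base case, the inductive split) are all sound but standard, and they do not touch the essential difficulty. In short, your analysis correctly locates the obstruction but does not remove it, which is consistent with the statement's status as an open conjecture.
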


Note that the case $k=2$ is a direct consequence of the results in \cite{AF85} (cf. also \cite{F87}). Huang, Loh and Sudakov \cite{HLS12} proved \eqref{eqn-1} for $n>3k^2s$. Very recently, Gao, Lu, Ma and Yu \cite{GLMY} proved \eqref{eqn-1} for $k=3$ and $s>s_0$. Another recent achievement is due to Lu, Wang and Yu \cite{LWY20} where \eqref{eqn-1} is proved for general $k$ in the range $n\geq 2k(s+1)$, $s>s_0$. We should mention that the case $s=1$ of \eqref{eqn-1} is an easy consequence of the Kruskal-Katona Theorem (cf. \cite{Daykin74}). These results suggest that Conjecture \ref{conj-1} might be easier to attack in the case $s>s_0$ as well.

\end{document}